\documentclass[11pt,reqno]{amsart}

\usepackage{enumerate,url,amssymb,  mathrsfs}%, pdfsync}% ,showkeys, pdfsync}

\usepackage{graphicx}
\usepackage{esint,comment}
\pdfoutput=1

\newtheorem{theorem}{Theorem}[section]

\newtheorem*{lemma*}{Lemma}

\newtheorem{corollary}[theorem]{Corollary}

\theoremstyle{definition}

\theoremstyle{remark}

\numberwithin{equation}{section}

%    Absolute value notation

\def\XXint#1#2#3{{\setbox0=\hbox{$#1{#2#3}{\int}$}
\vcenter{\hbox{$#2#3$}}\kern-.5\wd0}}

\setcounter{tocdepth}{1}

\begin{document}
\baselineskip6mm
\vskip0.4cm
\title[Some transcendental equations on the Stieltjes cone]{Some transcendental equations on the Stieltjes cone}

\author[Filippo Giraldi]{Filippo Giraldi}
\address{School of Chemistry and
Physics, University of KwaZulu-Natal and National Institute for
Theoretical Physics, KwaZulu-Natal, Westville Campus, Durban 
4000, South Africa \\  
Gruppo Nazionale per la
Fisica Matematica (GNFM-INdAM)
c/o Istituto Nazionale di Alta Matematica Francesco Severi\\
Citta' Universitaria, Piazza Aldo Moro 5, 00185 Roma, Italy}
\email{giraldi.filippo@gmail.com,giraldi@ukzn.ac.za,filgi@libero.it}

%\footnotetext[1]{}

%\thanks{}

\subjclass[2010]{Primary 30C62; Secondary 30H30}

%\date{\today}

\keywords{ transcendental equations, real zeros, Stieltjes transform, special functions}

%-----------------------------------------------------------------------------------------

\begin{abstract}
A general class of transcendental equations in complex domain is considered for functions belonging to the Stieltjes cone.
Under certain conditions %involving the Stieltjes transform, %involving the analytic function $g(z)$ and the inverse Stieltjes transform, 
each transcendental equation has no solution or one, at most, in the complex plane cut along the negative real axis. The unique solution is real valued and positive with an analytical bound. 
Particular cases consist in transcendental equations containing exponential, hyperbolic, power law, logarithmic and special functions. The present approach provides a simple way to prove that some special functions have no zero in certain sectors of the complex plane cut along the negative real axis. 
\end{abstract}
\allowdisplaybreaks

\maketitle

%\begin{classcode}65Hxx,30C15,65R10,44Axx\end{classcode}

\section{Introduction}

The determination of the solutions of transcendental equations in complex domain is fundamental for the analysis of various types of equations. For example, autonomous differential-difference equations exhibit stable solutions if the zeros of the characteristic functions have negative real part \cite{Cooke1986}. In simple or multiple delay differential equations it is required to find the zeros of the exponential polynomials \cite{Ruan2003}. 
For certain integral equations of convoluted form \cite{IntEqPolyanin, BhattaBook2007} the solutions depends on the singularities appearing by Laplace transforming \cite{W}. 
Great interest is also devoted to the study of the zeros of entire and special functions with the most various applications, from relaxation-oscillation to fractional diffusion phenomena, to name a few \cite{SFnist,ZeroML1,ZeroML2,Csordas2006,Segura2002,Tikhonov2002,Tikhonov2005,kochubei2004,Mainardi1999,Mainardi1998}.

Here, we analyze a general class of transcendental
equations in complex domain that are obtained from the Steltjes transforms of nonnegative real valued functions \cite{W,GP,Hirsch1972, Berg1975,Berg2001,Berg2002,Berg2006,Pandey1972}. We intend to determine the conditions for the existence, uniqueness, reality, positivity of the solution and provide analytical bounds.

\section{The trascendental equation}\label{classEq}
%The transcendental equation under study is defined in terms of the Stieltjes transform .
%Following Ref. \cite{}, 
The Stieltjes transform \cite{GP,W} of a function $f$ on $\mathbb{R}_+$ %:\mathbb{R}_+\to \mathbb{C}$ 
is defined %for $\left|\arg z\right|<\pi$ 
as follows, %the complex valued function
\begin{eqnarray}
\mathcal{S}\left[f\right](z)=\int_0^{\infty}\frac{f\left(\zeta\right)}{\zeta+z}\, d\zeta, \label{Stieltjes}
\end{eqnarray}
if the above integral exists. The condition 

i) $f\in L_1^{{\rm loc} }
\left(\mathbb{R}^+\right)$ and $f\left(\zeta\right) =\mathcal{O}\left(\zeta^{-\delta}\right)$ for $\zeta\to+\infty$, where $\delta>0$,

\hspace{-0.9em}is sufficient for the existence of the Stieltjes transform.
If the Stieltjes transform of the function $f$ exists in $z_0 \in \mathbb{C}\backslash \left(-\infty\right. ,0]$ it exists
and is analytic \cite{GP} %for $\left|\arg z\right|<\pi$ 
in the whole complex plane cut along the negative real axis, $\mathbb{C}\backslash \left(-\infty\right. ,0]$.

Consider the following class of transcendental equations in $\mathbb{C}\backslash \left(-\infty\right. ,0]$, %involving the Stieltjes transform of strictly positive real valued functions,
\begin{eqnarray}
\mathcal{S}\left[\varphi\right](z)- b z - c=0, \label{EqS}
\end{eqnarray}
where $\varphi \in L_1^{{\rm loc} }\left(\mathbb{R}^+\right)$ and $\varphi\geq 0$ on $\mathbb{R}^+$, %$\left|\arg z\right|<\pi$, 
$b\geq 0$, $c \in \mathbb{R}$.
\begin{theorem}\label{Th1}
The transcendental equation (\ref{EqS}) has in $\mathbb{C}\backslash \left(-\infty\right. ,0]$  
%in $\mathbb{C}/\left(-\infty,0\right.\left.\right]$ 
one solution under each of the two following conditions,
\begin{eqnarray}
&& b>0, \hspace{1em} m_{\varphi}>0, \label{cond1}\\
&& b=0, \hspace{1em} m_{\varphi}>0,\hspace{1em} c>0, \label{cond2}
\end{eqnarray}
%$b>0$ and $ m_{\varphi}>0$, or if $b=0$ and $m_{\varphi}+c>c>0$, 
where the critical value $m_{\varphi}$ is defined as follows,
\begin{equation}
m_{\varphi}=\int_0^{\infty}\frac{\varphi\left(\zeta\right)}{\zeta}\, d\zeta-c. \label{m}
\end{equation}
No solution exists, otherwise. The unique solution is real valued and positive, $z=x_0>0$, and $\left(m_{\varphi}/b \right)>x_0>0$ if $b>0$.
The condition $m_{\varphi}=+\infty$ is included.
\end{theorem}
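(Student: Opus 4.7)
Set $F(z):=\mathcal{S}[\varphi](z)-bz-c$, which is analytic on $\C\setminus(-\infty,0]$ by the remark preceding the theorem. The plan is to argue in three stages: first confine any zero of $F$ to the positive real axis, then study $F$ restricted to $(0,\infty)$ as a monotone real function, and finally extract the quantitative bound.

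\textbf{Step 1 (localization to $(0,\infty)$).} I would start from $\im(\zeta+z)^{-1}=-\im(z)/|\zeta+z|^2$ to get
\[
\im F(z) = -\im(z)\left[\,b+\int_0^\infty\frac{\varphi(\zeta)}{|\zeta+z|^2}\,d\zeta\,\right].
\]
Under \eqref{cond1} the bracket is strictly positive because $b>0$; under \eqref{cond2} the integral alone is strictly positive, since $\int_0^\infty\varphi(\zeta)/\zeta\,d\zeta=m_\varphi+c>0$ forces $\varphi$ to be nontrivial. In either case $F(z)=0$ forces $\im z=0$, so any root must sit in the intersection of the domain with $\R$, namely $(0,\infty)$.

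\textbf{Step 2 (existence, uniqueness, and non-existence in the other cases).} On $(0,\infty)$ the function $F$ is real, and differentiation under the integral would give
\[
F'(x)=-\int_0^\infty\frac{\varphi(\zeta)}{(\zeta+x)^2}\,d\zeta-b<0,
\]
so $F$ is strictly decreasing. I would then invoke monotone convergence to obtain $\lim_{x\downarrow0}F(x)=m_\varphi\in(0,+\infty]$ (the value $+\infty$ fits in seamlessly), and dominated convergence to get $\lim_{x\to\infty}F(x)=-\infty$ when $b>0$ and $-c$ when $b=0$. Under \eqref{cond1} or \eqref{cond2} the two limits straddle zero, so the intermediate value theorem combined with strict monotonicity produces a unique root $x_0>0$. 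In every complementary regime ($m_\varphi\le0$, or $b=0$ with $c\le0$) the two boundary values share a sign and $F$ cannot vanish on $(0,\infty)$; combined with Step~1 this excludes solutions throughout the cut plane.

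\textbf{Step 3 (bound).} If $b>0$, inserting $x_0$ into the equation and using the strict pointwise inequality $(\zeta+x_0)^{-1}<\zeta^{-1}$, integrated against the nontrivial $\varphi$, yields
\[
bx_0+c=\mathcal{S}[\varphi](x_0)<\int_0^\infty\frac{\varphi(\zeta)}{\zeta}\,d\zeta=m_\varphi+c,
\]
so $x_0<m_\varphi/b$; the positivity $x_0>0$ was already recorded in Step~1. None of these steps looks genuinely difficult: the heart of the matter is the Nevanlinna/Pick-type behaviour of $\mathcal{S}[\varphi]$, which forces both the sign of $\im F$ and the monotonicity on $(0,\infty)$. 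The only mild points requiring attention are promoting the pointwise $<$ to a strict integral inequality in Step~3 (which needs $\varphi$ not a.e.\ zero, guaranteed by the hypotheses) and accommodating $m_\varphi=+\infty$ via monotone convergence; both are routine.
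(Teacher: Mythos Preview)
Your proof is correct and follows essentially the same route as the paper: separate the imaginary part to force $\im z=0$, then exploit the strict monotonicity of $x\mapsto\mathcal{S}[\varphi](x)$ on $(0,\infty)$ together with its boundary limits $m_\varphi$ and $-c$ to invoke the intermediate value theorem. One small presentational point: you phrase Step~1 as holding only under \eqref{cond1} or \eqref{cond2}, but then appeal to it for the ``otherwise'' cases in Step~2; it would be cleaner to note up front that the bracket is strictly positive whenever $b>0$ or $\varphi\not\equiv 0$ a.e., which covers every non-degenerate situation at once.
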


\begin{proof}
Let $x$ and $y$ be respectively the real and imaginary part of the 
variable $z$, i.e., $z=x+\imath y$, where $\imath$ is the imaginary unit.
The imaginary part of Eq. (\ref{Eq}) leads to the following equation,
\begin{equation}
y \left( b+ \int_0^{\infty}\frac{\varphi\left(\zeta\right)}{\left(\zeta+x\right)^2+ y^2}\,d\zeta\right)=0. \label{Eqy}
\end{equation}
The solution of Eq. (\ref{Eqy}) is $y=0$ since $b \geq 0$ and $\varphi\geq 0$ on $\mathbb{R}_+$, in addition to the trivial solution 
of vanishing function $\varphi$ and vanishing parameter $b$. %$\phi=0$ and $b=0$. 
Consequently, the real part of Eq. (\ref{Eq})
results in the following equation,
\begin{equation}
\int_0^{\infty}\frac{\varphi\left(\zeta\right)}{\left(\zeta+x\right)}\, d\zeta-c=b x, \label{Eqx}
\end{equation}
where $x>0$ since Eq. (\ref{EqS}) is defined in the domain $\mathbb{C}\backslash\left(-\infty\right. ,0]$.
 The left hand side of Eq. (\ref{Eqx}) is a strictly decreasing monotonic and continuous function \cite{GP,A} on $\mathbb{R_+}$, tends to $m_{\varphi}$
%$\int_0^{\infty}\varphi\left(\zeta\right)/\zeta\, d\zeta$ 
for $x\to 0^+$ and to $\left(-c^+\right)$ for $x\to +\infty$. Consequently, if $b>0$ one solutions exists
for $m_{\varphi}>0$. This solution is positive and bounded by the value $\left(\int_0^{\infty}
\varphi\left(\zeta\right)/\zeta\, d\zeta -c\right)/b$. No solution exists for $m_{\varphi}\leq 0$. If $b=0$ 
one solution exists for $\int_0^{\infty}
\varphi\left(\zeta\right)/\zeta\, d\zeta>c>0$, while no solution exists for $c\leq 0$ and for $c\geq\int_0^{\infty}
\varphi\left(\zeta\right)/\zeta\, d\zeta$. The present analysis holds also if $\int_0^{\infty}
\varphi\left(\zeta\right)/\zeta\, d\zeta =+\infty$.
\end{proof}

\subsection{Stieltjes representation}

Under certain conditions, a complex valued function %that is defined on s 
is representable in the complex plane cut along the negative real axis as the Stieltjes transform of a nonnegative real valued function up to a nonnegative constant.
Following Ref. \cite{SeshuJMAA1962}, an analytic function $g$ is defined to be positive real if no singularity exists 
for $\Re\left\{z\right\}>0$,
if $g\left(\Re\left\{z\right\}\right)$ is real on $\mathbb{R}$ and if $\Re\left\{g(z)\right\}>0$ for $\Re\left\{z\right\}>0$. 
The class {\rm PRS} is defined as the set of functions $g$ such that $z g\left(z^2\right)$ is positive real \cite{SeshuJMAA1962}. 
%Let {\rm PRS0} be the subclass of 
Every function $g$ belonging to the class {\rm PRS} is representable \cite{SeshuJMAA1962} as a Stieltjes transform of a nonnegative real valued function, up to a nonnegative constant, in the whole complex plane cut along the negative real axis, $\mathbb{C}\backslash \left(-\infty\right. ,0]$.

Further conditions on the Stieltjes representation are reported in Ref. \cite{Hirsch1972,Berg1975,Berg2001,Berg2002,Berg2006}. If an analytic function $g$ on $\mathbb{C}\backslash \left(-\infty\right. ,0]$ fulfills the constraints
$\Im \left\{g(z)\right\}\leq 0$ for $\Im \left\{z\right\}>0$ and $g\left(\Re\left\{z\right\}\right)\geq 0$ for $\Re\left\{z\right\}>0$
the analytic extension of $g\left(\Re\left\{z\right\}\right)$ to $\mathbb{C}\backslash \left(-\infty\right. ,0]$, i.e., $g(z)$, is a Stieltjes transform of a nonnegative real valued function on $\mathbb{R}_+$ up to a nonnegative constant. This constant vanishes by requiring the following condition,
\begin{eqnarray}
\lim_{\Re\left\{z\right\}\to+\infty} g\left(\Re\left\{z\right\}\right)=0. \label{ginfty}
\end{eqnarray} The Stieltjes transforms of nonnegative functions on $\mathbb{R}_+$ represent a convex cone \cite{Hirsch1972,Berg1975,Berg2001,Berg2002,Berg2006} of the analytic functions on $\mathbb{C}\backslash \left(-\infty\right. ,0]$. 

%Consider functions $\phi$ %in $L_1^{{\rm loc} }\left(\mathbb{R}^+\right)$ 
%that fulfill the condition (i) and are real valued and nonnegative on $\mathbb{R}_+$.
%Let $\mathcal{L}$ be the set of the functions $g$ on $\mathbb{C}\backslash \left(-\infty\right. ,0]$ that
%are the Stieltjes transforms of the functions $\phi$, %\cite{W,GP} 
%\begin{eqnarray}
%&& g(z)=\mathcal{S}\left[\phi\right](z), \hspace{1em} \phi: \mathbb{R}_+\to\mathbb{R}_+\cup{\left\{0\right\}}.\label{gS}
%\end{eqnarray}

In light of the Stieltjes representation described above, we focus on functions $\phi$ in $L_1^{{\rm loc} }\left(\mathbb{R}^+\right)$ that are real valued and nonnegative on $\mathbb{R}_+$. Let $\mathcal{L}^{(+)}_{\mathcal{S}}$ be the set of the functions $g$ on $\mathbb{C}\backslash \left(-\infty\right. ,0]$ that are the Stieltjes transforms of the nonnegative real valued functions $\phi$, %\cite{W,GP} 
which means $g(z)=\mathcal{S}\left[\phi\right](z)$. %where $\phi: \mathbb{R}_+\to\mathbb{R}_+\cup{\left\{0\right\}}$.
%\begin{eqnarray}
%&& g(z)=\mathcal{S}\left[\phi\right](z), \hspace{1em} \phi: \mathbb{R}_+\to\mathbb{R}_+\cup{\left\{0\right\}}\label{gS}
%\end{eqnarray}
The following corollary holds.

\begin{corollary}
Consider the operator $\mathcal{T}_1:\phi\left(\zeta\right)\to \phi\left(a \zeta\right)$, where $a>0$, and the following
transcendental equation in $\mathbb{C}\backslash \left(-\infty\right. ,0]$,
\begin{eqnarray}
g\left( a z\right) - b z - c=0, \label{Eqg1}
\end{eqnarray}
where $g \in \mathcal{L}^{(+)}_{\mathcal{S}}$.
Since 
$\mathcal{S}\left[\mathcal{T}_1\left[\phi\right]\right](z)=g\left( a z\right) $, if $b>0$ Eq. (\ref{Eqg1})
has %in $\mathbb{C}/ \left(-\infty\right. ,0]$ 
%for $\left|\arg z\right|<\pi$ 
one solution for $\rho_{\phi}> 0$, where
\begin{eqnarray}
\rho_{\phi}=a\int_0^{\infty}\frac{\phi\left(\zeta\right)}{\zeta}\, d \zeta-c. \label{rho}
\end{eqnarray}
The solution is real valued, $z= \xi_1$, and $\left(\rho_{\phi}/b\right)>\xi_1>0 $. If $b=0$ Eq. (\ref{Eqg1})
has %in $\mathbb{C}/ \left(-\infty\right. ,0]$ for $\left|\arg z\right|<\pi$ 
one solution for $\rho_{\phi} >c>0$. The solution is real valued and positive. No solution exists, %for $\left|\arg z\right|<\pi$, 
%in $\mathbb{C}/ \left(-\infty\right. ,0]$,  
otherwise.

If $\int_0^{\infty} \phi\left(\zeta\right)d \zeta<+\infty$, consider the operator 
$\mathcal{T}_2:\phi\left(\zeta\right)\to \zeta\phi\left( \zeta\right)$ and the following transcendental equation in $\mathbb{C}\backslash \left(-\infty\right. ,0]$,
\begin{eqnarray}
z g(z) + b z - c=0, \label{Eqg2}
\end{eqnarray}
where $g \in \mathcal{L}^{(+)}_{\mathcal{S}}$.
Since 
$$\mathcal{S}\left[\mathcal{T}_2\left[\phi\right]\right](z)=\int_0^{\infty} \phi\left(\zeta\right)d \zeta-z g(z),$$
if $b>0$ Eq. (\ref{Eqg2}) has %in $\mathbb{C}/ \left(-\infty\right. ,0]$ for $\left|\arg z\right|<\pi$ 
one solution for $c> 0$.
The solution is real valued, $z=\xi_2$, and $\left(c/b\right)>\xi_2>0 $. If $b=0$ Eq. (\ref{Eqg2})
has %$\mathbb{C}/ \left(-\infty\right. ,0]$ for $\left|\arg z\right|<\pi$ 
one solution for $  \int_0^{\infty} \phi\left(\zeta\right)d \zeta >c>0$.
The solution is real valued and positive. No solution exists, %,f 
otherwise.

Consider the operator $\mathcal{T}_3:\phi\left(\zeta\right)\to 
\phi\left( \zeta\right)/\left(\zeta+a\right)$ and the following
transcendental equation in $\mathbb{C}\backslash \left(-\infty\right. ,0]$,
\begin{eqnarray}
&&\frac{g(z) -g \left(a\right)}{z-a}+ b z + c=0, \label{Eqg3}
\end{eqnarray} where $g \in \mathcal{L}^{(+)}_{\mathcal{S}}$.
Since $$\mathcal{S}\left[\mathcal{T}_3\left[\phi\right]\right](z)
=\frac{g(z)-g(a)}{a-z},$$ if $b>0$ Eq. (\ref{Eqg3})
has %for $\left|\arg z\right|<\pi$ %in $\mathbb{C}/ \left(-\infty\right. ,0]$ 
one solution for $\mu_{\phi}> 0$, where \begin{equation}
\mu_{\phi}=\int_0^{\infty}\frac{\phi\left(\zeta\right)}{\zeta\left(\zeta+a\right)}\, d \zeta-c. \label{muphi}
\end{equation}
The solution is real valued, $z= \xi_3$, and $\left(\mu_{\phi}/b\right)>\xi_3>0 $.
If $b=0$ Eq. (\ref{Eqg3}) has %in $\mathbb{C}/ \left(-\infty\right. ,0]$ for $\left|\arg z\right|<\pi$ 
one solution for $\mu_{\phi}>c>0$.
The solution is real valued and positive. No solution exists, %for %$\left|\arg z\right|<\pi$, %,in $\mathbb{C}/ \left(-\infty\right. ,0]$,     
otherwise.
Notice that for $z=a$ the left hand side of Eq. (\ref{Eqg3}) is analytically extended to the value $g^{\prime}(a)+ b a +c$.

Consider the operator $\mathcal{T}_4:\phi\left(\zeta\right)\to \phi\left( a/\zeta\right)/\zeta$ and the following
transcendental equation in $\mathbb{C}\backslash \left(-\infty\right. ,0]$,
\begin{eqnarray}
\frac{g\left(a/z\right)}{z}- b z - c=0, \label{Eqg4}
\end{eqnarray} where $g \in \mathcal{L}^{(+)}_{\mathcal{S}}$.
Since 
$\mathcal{S}\left[\mathcal{T}_4\left[\phi\right]\right](z)=g\left(a/z\right)/z$,
if $b>0$ Eq. (\ref{Eqg4}) has %in $\mathbb{C}/ \left(-\infty\right. ,0]$ 
one solution for $\nu_{\phi}>0$, where
\begin{equation}
\nu_{\phi}=\int_0^{\infty}\frac{\phi\left(a/\zeta \right)}{\zeta^2}\, d \zeta-c. \label{nuphi}
\end{equation}
The solution is real valued, $z= \xi_4$, and $\left(\nu_{\phi}/b\right)>\xi_4>0$.
If $b=0$ Eq. (\ref{Eqg4})
has %in $\mathbb{C}/ \left(-\infty\right. ,0]$ 
one solution for $\nu_{\phi}>c>0$. The solution is real valued and positive. No solution exists, %in $\mathbb{C}/ \left(-\infty\right. ,0]$, 
otherwise.

Consider the operator $\mathcal{T}_5:\phi\left(\zeta\right)\to \phi\left( \zeta^{1/2}\right)$ and the following
transcendental equation in $\mathbb{C}\backslash \left(-\infty\right. ,0]$,
\begin{eqnarray}
g\left( \imath z^{1/2}\right)+g\left( -\imath z^{1/2}\right)- b z - c=0, \label{Eqg5}
\end{eqnarray} where $g \in \mathcal{L}^{(+)}_{\mathcal{S}}$.
Since 
$$\mathcal{S}\left[\mathcal{T}_5\left[\phi\right]\right](z)=g\left( \imath z^{1/2}\right)+g\left( -\imath z^{1/2}\right),$$
 if $b>0$ Eq. (\ref{Eqg5}) 
has %in $\mathbb{C}/ \left(-\infty\right. ,0]$ 
one solution for $\eta_{\phi}> 0$,
where\begin{equation}
\eta_{\phi}=\int_0^{\infty}\frac{\phi\left(\zeta^{1/2}\right)}{\zeta}\, d \zeta-c. \label{etaphi}
\end{equation}
The solution is real valued, $z=\xi_5$, and $\left(\eta_{\phi}/b\right)>\xi_5>0 $.
 If $b=0$ Eq. (\ref{Eqg5})
has %in $\mathbb{C}/ \left(-\infty\right. ,0]$ 
one solution for $\eta_{\phi}>c>0$. The solution is real valued and positive. No solution exists, %in $\mathbb{C}/ \left(-\infty\right. ,0]$, 
otherwise.

\end{corollary}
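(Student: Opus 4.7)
The plan is to reduce each of the five cases to a direct application of Theorem~\ref{Th1}. For each $i=1,\ldots,5$ I would set $\varphi_i(\zeta)=\mathcal{T}_i[\phi](\zeta)$, verify that $\varphi_i\in L_1^{{\rm loc}}(\mathbb{R}^+)$ and $\varphi_i\ge 0$ on $\mathbb{R}_+$, establish the Stieltjes-transform identity for $\mathcal{S}[\varphi_i](z)$ printed in the statement, and then rewrite the corresponding transcendental equation in the canonical form $\mathcal{S}[\varphi_i](z)-\tilde{b}z-\tilde{c}=0$ of Theorem~\ref{Th1}. The claims on existence, uniqueness, reality, positivity, and the analytic bound then follow directly by computing $m_{\varphi_i}$ from definition~(\ref{m}) and identifying it with $\rho_\phi$, $\mu_\phi$, $\nu_\phi$, $\eta_\phi$, or (in Case~2) $c$.

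Each of the five transform identities rests on one elementary manipulation under the integral sign. Case~1 follows from the rescaling $u=a\zeta$ applied to $\int_0^\infty \phi(a\zeta)/(\zeta+z)\,d\zeta$. Case~2 uses the splitting $\zeta/(\zeta+z)=1-z/(\zeta+z)$, legitimate termwise because of the hypothesis $\int_0^\infty\phi\,d\zeta<+\infty$. Case~3 rests on the partial-fraction identity $1/((\zeta+a)(\zeta+z))=(a-z)^{-1}(1/(\zeta+z)-1/(\zeta+a))$. Case~4 is the inversion $u=a/\zeta$. Case~5 combines $u=\zeta^{1/2}$ with $2u/(u^2+z)=1/(u-\imath z^{1/2})+1/(u+\imath z^{1/2})$, where the branch of $z^{1/2}$ is the principal one on $\mathbb{C}\backslash(-\infty,0]$.

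For Cases~1, 3, 4, 5 the given equations already take the form $\mathcal{S}[\varphi_i](z)-bz-c=0$, after an obvious sign flip $(a-z)\to(z-a)$ in Case~3, so Theorem~\ref{Th1} applies verbatim with $m_{\varphi_i}$ equal to the quantity named in the statement. Case~2 needs one additional algebraic step: since $\mathcal{S}[\mathcal{T}_2[\phi]](z)=\int_0^\infty\phi\,d\zeta-zg(z)$, the equation $zg(z)+bz-c=0$ becomes $\mathcal{S}[\mathcal{T}_2[\phi]](z)-bz-(\int_0^\infty\phi\,d\zeta-c)=0$, and then $m_{\varphi_2}=\int_0^\infty\zeta\phi(\zeta)/\zeta\,d\zeta-(\int_0^\infty\phi\,d\zeta-c)=c$, which explains the threshold $c>0$ and the bound $c/b$ in that case.

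The step I expect to require the most care is the analytic extension in Case~3: the left-hand side of~(\ref{Eqg3}) has an apparent singularity at $z=a$ that must be removed by identifying the limit as $g^{\prime}(a)+ba+c$, which in turn uses the analyticity of $g$ on $\mathbb{C}\backslash(-\infty,0]$. Beyond that, the main bookkeeping task is to verify case by case that $\varphi_i=\mathcal{T}_i[\phi]$ inherits nonnegativity and local integrability on $\mathbb{R}_+$, since these are the hypotheses Theorem~\ref{Th1} requires; none of these verifications should be more than a routine check.
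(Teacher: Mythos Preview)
Your proposal is correct and follows essentially the same approach as the paper: verify that each $\mathcal{T}_i[\phi]$ is nonnegative on $\mathbb{R}_+$, identify the corresponding Stieltjes transform, and apply Theorem~\ref{Th1}. The only difference is that the paper cites the transform identities from the Bateman tables \cite{EMOT2} rather than deriving them by the elementary substitutions you outline, and your explicit handling of Case~2 (the algebraic rewrite yielding $m_{\varphi_2}=c$) makes transparent a step the paper leaves implicit.
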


\begin{proof}
Since $\phi\geq0$ on $\mathbb{R}_+$, the functions $\phi\left(a \zeta\right)$,
$\zeta\phi\left( \zeta\right)$, $\phi\left( \zeta\right)/\left(\zeta+a\right)$, $\phi\left( a/\zeta\right)/\zeta$,
$\phi\left( \zeta^{1/2}\right)/\zeta$ are nonnegative on $\mathbb{R}_+$. The corresponding Stieltjes transforms \cite{EMOT2} provide Eqs. (\ref{Eqg1}), (\ref{Eqg2}), (\ref{Eqg3}), (\ref{Eqg4}), (\ref{Eqg5}). The conditions for the existence, uniqueness, reality and positivity of the solutions are obtained via Theorem \ref{Th1}.
Notice that in each case the solution exists and is positive if the critical value is infinite.
\end{proof}

At this stage we are equipped to analyze a further form of transcendental equations involving the Stieltjes and the following corollary holds.
\begin{corollary}
Consider the following equation in $\mathbb{C}\backslash \left(-\infty\right. ,0]$,
\begin{equation}
\mathcal{S}\left[ \mathcal{T}\left[\phi\right]\right](z)-bz-c=0. \label{Eq} 
\end{equation}
%The function $\phi$ fulfills the condition (i) and 
The general operator $\mathcal{T}$ is defined 
as follows,
\begin{eqnarray}
\mathcal{T}=\left(\mathcal{T}_{j_1} \circ \cdots \circ  \mathcal{T}_{j_k}\right). \label{T}
\end{eqnarray}
The index $j_l$ can take the values $0,1,\ldots,5$, for every $l=1,\ldots,k$, and $k\in \mathbb{N}_0$, while $\mathcal{T}_0$ is the identity operator.
The transcendental equation (\ref{Eq}) has in $\mathbb{C}\backslash \left(-\infty\right. ,0]$ 
%in $\mathbb{C}/\left(-\infty,0\right.\left.\right]$ 
one solution under each of the two following conditions,
\begin{eqnarray}
&& b>0, \hspace{2em} m_{\mathcal{T}\left[\phi\right]}>0, \label{cond1g}\\
&& b=0, \hspace{2em} m_{\mathcal{T}\left[\phi\right]}>0,\hspace{2em} c>0, \label{cond2g}
\end{eqnarray}
where
\begin{eqnarray}
&&m_{\mathcal{T}\left[\phi\right]}=\int_0^{\infty}\frac{1}{\zeta}\, \mathcal{T} \left[\phi\right]\left(\zeta\right)\, d\zeta-c. \label{mTphi}
\end{eqnarray} 
No solution exists, otherwise. The unique solution is real valued and positive, $z=\kappa_0$, and 
$\left(m_{\mathcal{T}\left[\phi\right]}/b\right)>\kappa_0>0$ if $b>0$.
The condition $m_{\mathcal{T}\left[\phi\right]}=+\infty$ is included. 
Notice that the operator $\mathcal{T}_2$ requires the involved integral to be finite.
\end{corollary}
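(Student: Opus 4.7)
The strategy is to reduce the statement directly to Theorem \ref{Th1} applied with $\varphi := \mathcal{T}[\phi]$, once we verify that $\mathcal{T}[\phi]$ lies in the admissible class, namely that it is nonnegative on $\mathbb{R}_+$ and belongs to $L_1^{\mathrm{loc}}(\mathbb{R}^+)$. The key observation, already implicit in the proof of the previous corollary, is that each of the five elementary operators $\mathcal{T}_1,\dots,\mathcal{T}_5$ (and trivially $\mathcal{T}_0$) preserves pointwise nonnegativity on $\mathbb{R}_+$, since they act by dilations, multiplication by $\zeta$, division by $\zeta+a$ with $a>0$, inversion, or square-root composition, all of which send nonnegative functions on $\mathbb{R}_+$ to nonnegative functions on $\mathbb{R}_+$.

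I would proceed by induction on the number $k$ of factors in the composition. The base case $k=0$ is immediate from Theorem \ref{Th1}. For the inductive step, assuming $\psi := \mathcal{T}_{j_2}\circ\cdots\circ\mathcal{T}_{j_k}[\phi]$ is nonnegative and locally integrable on $\mathbb{R}^+$, I would verify separately for each $j\in\{0,1,2,3,4,5\}$ that $\mathcal{T}_j[\psi]$ enjoys the same two properties: nonnegativity is clear, and local integrability follows from a straightforward change of variables (for $\mathcal{T}_1,\mathcal{T}_4,\mathcal{T}_5$), from the boundedness of $1/(\zeta+a)$ on compacts of $\mathbb{R}_+$ (for $\mathcal{T}_3$), and from the boundedness of $\zeta$ on compacts (for $\mathcal{T}_2$). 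Hence $\mathcal{T}[\phi]$ is nonnegative and locally integrable on $\mathbb{R}^+$.

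Having established this, Theorem \ref{Th1} applies verbatim with $\varphi$ replaced by $\mathcal{T}[\phi]$ and $m_\varphi$ replaced by
\[
m_{\mathcal{T}[\phi]}=\int_0^{\infty}\frac{\mathcal{T}[\phi](\zeta)}{\zeta}\,d\zeta-c,
\]
yielding the stated dichotomy: existence of a unique solution $\kappa_0\in(0,m_{\mathcal{T}[\phi]}/b)$ when $b>0$ and $m_{\mathcal{T}[\phi]}>0$, existence of a unique positive solution when $b=0$ and $m_{\mathcal{T}[\phi]}>c>0$, and no solution otherwise, with the condition $m_{\mathcal{T}[\phi]}=+\infty$ admissible. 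The only genuine obstacle is bookkeeping for $\mathcal{T}_2$: if $\mathcal{T}_2$ appears at stage $l$ in the composition, then I must know that $\mathcal{T}_{j_{l+1}}\circ\cdots\circ\mathcal{T}_{j_k}[\phi]$ has finite integral on $\mathbb{R}_+$, which is precisely the hypothesis already isolated in the statement of the previous corollary for $\mathcal{T}_2$ and which must be carried through the induction; this is the place where one has to be careful but where nothing deeper than the explicit formulas for the five operators is required.
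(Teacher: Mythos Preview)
Your proposal is correct and follows essentially the same approach as the paper: observe that each elementary operator $\mathcal{T}_0,\dots,\mathcal{T}_5$ preserves nonnegativity on $\mathbb{R}_+$, so the composite $\mathcal{T}[\phi]$ is nonnegative, and then apply Theorem~\ref{Th1} directly with $\varphi=\mathcal{T}[\phi]$. The paper's own proof is in fact even terser than yours---it simply asserts nonnegativity of $\mathcal{T}[\phi]$ and invokes Theorem~\ref{Th1} in two sentences---whereas you supply the inductive verification of local integrability that the paper leaves implicit; your added care is a virtue, not a departure.
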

\begin{proof}
Due to the definition of the operators $\mathcal{T}_0, \ldots, \mathcal{T}_5$, the function $\mathcal{T} \left[\phi\right]\left(\zeta\right)$ is nonegative on $\mathbb{R}_+$.
Consequently, the proof is obtained by directly applying Theorem \ref{Th1} to Eq. (\ref{Eq}). 
\end{proof}

\section{Applications}

We analyze some examples of transcendental equations in $\mathbb{C}\backslash \left(-\infty\right. ,0]$ that contain exponential and hyperbolic forms, power laws, logarithms and special functions. These equations are obtained %from Eq. (\ref{Eq}) 
via the Stieltjes transforms of nonnegative real valued functions reported in Ref. \cite{EMOT2,HbkST,GHMC1967,PBMIS2,W0,W1,W2,W3,Hen2,PBMIS3,Mainardi2010} and the same notation is adopted here. The existence, uniqueness, reality, nonnegativity and bounds of the solution are studied via the theorem and corollaries that are introduced above. For the sake of clarity, we remind the constraints on the parameters involved, $a,b>0$, and $c\in\mathbb{R}$.

\subsection{Exponential forms}

The transcendental equation 
\begin{eqnarray}
\pi\,z^{-1/2}\left(1-e^{-2 a z^{1/2}}\right)\Big/2-b z-c=0,\hspace{1em}\left|\arg z\right|<\pi, \label{e1}
\end{eqnarray}
has %in $\mathbb{C}/\left(-\infty,0\right.\left.\right]$ 
one solution under each of the following conditions, 
\begin{eqnarray}
&& b>0,\hspace{1em} \pi a>c, \nonumber \\
&& b=0, \hspace{1em}\pi a>c>0. \nonumber 
\end{eqnarray}
 The solution is real valued and positive, $z=x_1>0$, and $\left(\pi a -c\right)/b>x_1>0$ if $b>0$. No solution exists, otherwise.
As proof \cite{EMOT2}, apply Theorem \ref{Th1} to $\varphi\left(\zeta\right)=\zeta^{-1/2}\sin^2\left(a \zeta^{1/2}\right)$.

The transcendental equation 
\begin{eqnarray}
\pi z^{-1/2}\left(1+e^{-2 a z^{1/2}}\right)\Big/2-b z-c=0,\hspace{1em}\left|\arg z\right|<\pi, \label{e2}
\end{eqnarray}
has %in $\mathbb{C}/\left(-\infty,0\right.\left.\right]$ 
one solution if $b>0$, or if $b=0$ and $c>0$.
 The solution is real valued and positive. No solution exists, otherwise. 
As proof \cite{EMOT2}, apply Theorem \ref{Th1} to $\varphi\left(\zeta\right)=\zeta^{-1/2}\cos^2\left(a \zeta^{1/2}\right)$.
Notice that the critical value is infinite.

\subsection{Hyperbolic forms}

The transcendental equation 
\begin{eqnarray}
&&\pi z^{-1/2}\frac{\left(\beta/\lambda-\lambda/\beta\right) \sinh \left(2 a z^{1/2}\right)/2-1}{\left(\beta \sinh \left(a z^{1/2}
\right)\right)^2-\left(\lambda \cosh \left(a z^{1/2}\right)\right)^2}-b z -c=0,\label{hyperbolic1}\\
&&\beta \lambda>0,\hspace{1em}\left|\arg z\right|<\pi, \nonumber
\end{eqnarray}
has one solution if $b>0$, or if $b=0$ and $c>0$.
 The solution is real valued and positive. No solution exists, otherwise. 
As proof \cite{EMOT2}, apply Theorem \ref{Th1} to $\varphi\left(\zeta\right)=\zeta^{-1/2}\Big/\Big(\left(\beta\sin\left( a \zeta^{1/2}\right)\right)^2+
\left(\lambda\cos\left( a \zeta^{1/2}\right)\right)^2\Big)$.
Notice that the critical value is infinite.

\subsection{Power laws}
The transcendental equation \begin{eqnarray}
b z- \pi \csc \left(\pi \alpha\right) z^{\alpha-1}+c =0,  \hspace{1em} 1>\alpha>0,\hspace{1em}\left|\arg z\right|<\pi,   \label{p1}
\end{eqnarray}
has one solution if $b>0$, or if $b=0$ and $c>0$. The solution is real valued and positive. 
No solution exists, otherwise. 
As proof \cite{EMOT2}, apply Theorem \ref{Th1} to $\varphi\left(\zeta \right)=\zeta^{\alpha-1}$. Notice that the critical value is infinite.

The transcendental equation 
\begin{eqnarray}
&&\hspace{-4em}\pi \csc\left(\pi \alpha\right)\, \frac{z^{\alpha}-a^{\alpha} }{z-a}-bz-c=0, 
\hspace{1em}1>\alpha>-1,\hspace{1em}\left|\arg z\right|<\pi, \label{p2}
\end{eqnarray}
has %in $\mathbb{C}/\left(-\infty,0\right.\left.\right]$ 
one solution under each of the following conditions, 
\begin{eqnarray}
&& b>0,\hspace{1em} 1>\alpha>0, \hspace{1em} c<\pi a^{\alpha-1} \csc \left(\pi \alpha\right), \nonumber \\
&&b>0, \hspace{1em}0\geq \alpha>-1, \nonumber\\
&&b=0,\hspace{1em} 1>\alpha>0, \hspace{1em}\pi a^{\alpha-1}\csc\left(\pi \alpha\right)>c>0, \nonumber \\
&& b=0, \hspace{1em}0\geq \alpha>-1, \hspace{1em}c>0. \nonumber 
\end{eqnarray}
 The solution is real valued and positive, $z=x_2$, and $\left(\pi a^{\alpha-1} \csc \left(\pi \alpha\right)-c\right)/b>x_2>0$ if
$b>0$ and $1>\alpha>0$. No solution exists, otherwise. 
The left hand side of Eq. (\ref{p2}) is analytically extended in $z=a$ to the 
value $\left(\pi \alpha a^{\alpha-1} \csc\left(\pi \alpha\right)-b a-c \right)$.
As proof \cite{EMOT2}, apply Theorem \ref{Th1} to $\varphi\left(\zeta\right)=\zeta^{\alpha}/\left(\zeta+a\right)$.

The transcendental equation 
\begin{eqnarray}
&&\hspace{-4em}\pi\,\frac{ a^{\alpha-1} \sec \left(\pi\alpha/2\right)z-2 \csc \left(\pi \alpha\right)z^{\alpha}
+a^{\alpha} \csc \left(\pi\alpha/2\right) }{2\left(z^2+a^2\right)}-b z-c=0,  \label{p3}\\
&&\hspace{-4em}2>\alpha>-1, \hspace{1em}\left|\arg z\right|<\pi,\nonumber
\end{eqnarray}
has %in $\mathbb{C}/\left(-\infty,0\right.\left.\right]$ 
one solution under each of the following conditions,
\begin{eqnarray}
&& b>0,\hspace{1em} 2>\alpha>0, \hspace{1em} c<\pi a^{\alpha-2} \csc \left(\pi \alpha/2\right)/2, \nonumber \\
&&b>0, \hspace{1em}0\geq \alpha>-1, \nonumber\\
&& b=0, \hspace{1em}2> \alpha>0, \hspace{1em}\pi a^{\alpha-2} \csc \left(\pi \alpha/2\right)/2>c>0, \nonumber \\
&&b=0,\hspace{1em} 0\geq \alpha>-1,\hspace{1em} c>0. \nonumber 
\end{eqnarray}
 The solution is real valued and positive, $z=x_3$, and 
$$\hspace{-4em}\left(\pi a^{\alpha-2} \csc \left(\pi \alpha/2\right)/2-c\right)/b>x_3>0, \hspace{1em}
b>0, \hspace{1em}2>\alpha>0.$$ No solution exists, otherwise. 
As proof \cite{EMOT2}, apply Theorem \ref{Th1} to $\varphi\left(\zeta\right)=\zeta^{\alpha}/\left(\zeta^2+a^2\right)$.

\subsection{Logarithms}

The transcendental equation 
\begin{equation}
\frac{\log\left(a/z\right)}{\left(z-a\right)}+bz+c=0, \hspace{1em}\left|\arg z\right|<\pi, \label{l1}
\end{equation}
has %in $\mathbb{C}/\left(-\infty,0\right.\left.\right]$ 
one solution if $b>0$, or if $b=0$ and $c>0$.
 The solution is real valued and positive. No solution exists, otherwise.
The left hand side of Eq. (\ref{p2}) is analytically extended in $z=a$ to the 
value $\left(b a +c-1/a\right)$ .
As proof \cite{EMOT2}, apply Theorem \ref{Th1} to $\varphi\left(\zeta\right)=1/\left(\zeta+a\right)$.
Notice that the critical value is infinite.

The transcendental equation 
\begin{equation}
\frac{ \log\left(z/a\right)- \pi z /(2 a)}{\left(z^2+a^2\right)}+bz+c=0, \hspace{1em}\left|\arg z\right|<\pi, \label{l2}
\end{equation}
has %in $\mathbb{C}/\left(-\infty,0\right.\left.\right]$ 
one solution if $b>0$, or if $b=0$ and $c>0$.
 The solution is real valued and positive. No solution exists, otherwise. 
As proof \cite{EMOT2}, apply Theorem \ref{Th1} to $\varphi\left(\zeta\right)=1/\left(\zeta^2+a^2\right)$.
Notice that the critical value is infinite.

The transcendental equation 
\begin{equation}
\frac{z \log\left(z/a\right)+ \pi a /2}{\left(z^2+a^2\right)}-bz-c=0, \hspace{1em}\left|\arg z\right|<\pi, \label{l3}
\end{equation}
has %in $\mathbb{C}/\left(-\infty,0\right.\left.\right]$ 
one solution under each of the following conditions,
\begin{eqnarray}
&& b>0,\hspace{1em} c<\pi / \left(2 a \right), \nonumber \\
&&b=0, \hspace{1em}\pi / \left(2 a \right)>c>0. \nonumber
\end{eqnarray}
 The solution is real valued and positive, $z=x_4$, and $\left(\pi / \left(2 a \right)-c\right)/b>x_4>0$ if
$b>0$. No solution exists, otherwise. 
As proof \cite{EMOT2}, apply Theorem \ref{Th1} to $\varphi\left(\zeta\right)=\zeta/\left(\zeta^2+a^2\right)$.

The transcendental equation \begin{eqnarray}
2 \pi z^{-1/2}\log \left(a^{1/2} z^{1/2} + 1\right)-b z-c=0,\hspace{1em}\left|\arg z\right|<\pi,\label{l4}
\end{eqnarray} has %in $\mathbb{C}/\left(-\infty,0\right.\left.\right]$ 
one solution under each of the following conditions,
\begin{eqnarray}
&& b>0,\hspace{1em} c< 2\pi a^{1/2}, \nonumber \\
&&b=0, \hspace{1em}2\pi a^{1/2}>c>0. \nonumber
\end{eqnarray}
 The solution is real valued and positive, $z=x_5$, and $\left(2\pi a^{1/2}-c\right)/b>x_5>0$ if
$b>0$. No solution exists, otherwise. 
As proof \cite{EMOT2}, apply Theorem \ref{Th1} to $\varphi\left(\zeta\right)=\zeta^{-1/2} \log \left(a \zeta+1\right)$.

The transcendental equation 
\begin{equation}
\frac{\pi^2 +\log^2\left(z/a\right)}{2\left(z+a\right)}-bz-c=0, \hspace{1em}\left|\arg z\right|<\pi, \label{l5}
\end{equation}
has %in $\mathbb{C}/\left(-\infty,0\right.\left.\right]$ 
one solution if $b>0$, or if $b=0$ and $c>0$.
 The solution is real valued and positive. No solution exists, otherwise. 
As proof \cite{EMOT2}, apply Theorem \ref{Th1} to $\varphi\left(\zeta\right)=\log \left(\zeta/a\right)/\left(\zeta-a\right)$ for $\zeta\neq a $
and $\varphi\left(a\right)=1/a$. Notice that the critical value is infinite.

The transcendental equation \begin{eqnarray}
&&\hspace{-4em}\pi\,\frac{a^{\alpha} \log\left(a/z\right)/\pi+ \csc \left(\pi \alpha\right)z^{\alpha
} -a^{\alpha} \cot\left(\pi \alpha\right)}{a+z}-b z-c=0, \label{l6}\\
&&\hspace{-4em} 1>\alpha>0, \hspace{1em}\left|\arg z\right|<\pi, \nonumber
\end{eqnarray}
has one solution if $b>0$, or if $b=0$ and $c>0$. The solution is real valued and positive. 
As proof \cite{EMOT2}, apply Theorem \ref{Th1} to $\varphi\left(\zeta\right)=\left(\zeta^{\alpha}-a^{\alpha}\right)/\left(\zeta-a\right)$ for $\zeta\neq a$
and $\varphi(a)=\alpha a^{\alpha-1}$. Notice that the critical value is infinite.

The transcendental equation \begin{eqnarray}
&& \hspace{-4em}\frac{\pi}{z+a}\Bigg( \csc\left(\pi \alpha\right)\frac{\pi \cot\left(\pi \alpha\right)\left(z^{\alpha}-a^{\alpha}\right)-z^{\alpha}
\log \left(z/a\right)}{z-a}+\frac{\pi a^{\alpha-1}}{4 \cos^2\left(\pi \alpha/2\right)}\Bigg)\nonumber \\
&&\hspace{-4em}-bz -c=0,\hspace{1em}1>\alpha>-1, 
\hspace{1em}\left|\arg z\right|<\pi, \label{l7} 
\end{eqnarray}has %in $\mathbb{C}/\left(-\infty,0\right.\left.\right]$ 
one solution
under each of the following conditions,
\begin{eqnarray}
&& b>0,\hspace{1em} 1>\alpha>0, \hspace{1em} c<\pi^2 a^{\alpha-2}\csc^2 \left(\pi \alpha/2\right)/4, \nonumber \\
&&b>0, \hspace{1em}0\geq \alpha>-1, \nonumber\\
&& b=0, \hspace{1em}1> \alpha>0, \hspace{1em}\pi^2 a^{\alpha-2}\csc^2 \left(\pi \alpha/2\right)/4>c>0, \nonumber\\
&&b=0,\hspace{1em} 0\geq \alpha>-1,\hspace{1em} c>0. \nonumber 
\end{eqnarray}
 The solution is real valued and positive, $z=x_6$, and 
$$\hspace{-2em}\left(\pi^2 a^{\alpha-2}\csc^2 \left(\pi \alpha/2\right)/4-c\right)/b>x_6>0, \hspace{1em}b>0, \hspace{1em}1>\alpha>0.$$ No solution exists, otherwise. The left hand side of Eq. (\ref{l5}) is analytically extended in $z=a$
to the value $$ \hspace{-3em}\frac{\pi a^{\alpha-2}}{2}\Bigg( \csc\left(\pi \alpha\right)\left(\pi \alpha \cot\left(\pi \alpha\right)-1\right)+
\frac{\pi}{4 \cos^2\left(\pi \alpha/2\right)}\Bigg)-ba -c.$$
 As proof \cite{EMOT2}, apply Theorem \ref{Th1} to $\varphi\left(\zeta\right)=\zeta^{\alpha} \log \left(\zeta/a\right)/\left(\zeta^2-a^2\right)$ for $\zeta\neq a$ and $\varphi(a)=a^{\alpha-2}/2$.
Notice that the critical value is infinite for $0\geq \alpha>-1$.

\subsection{Special functions}

We consider transcendental equations involving various types of special functions. In some cases the analysis of the solution provides additional information about the zeros of these functions.

\subsubsection{Incomplete Gamma function}
The transcendental equation \begin{eqnarray}
\hspace{-2em}\Gamma\left(1-\alpha\right) z^{-\alpha} e^{a z} \Gamma\left(\alpha, a z\right) -bz-c=0,
\hspace{1em}\alpha<1, \hspace{1em} \left|\arg z\right|<\pi,\label{gamma1}
\end{eqnarray}
in terms of the incomplete Gamma function \cite{EMOT2}, has %SFnist, }, has %in $\mathbb{C}/\left(-\infty,0\right.\left.\right]$ 
one solution
under each of the following conditions,
\begin{eqnarray}
&& b>0,\hspace{1em} 1>\alpha \geq 0, \nonumber \\
&& b>0, \hspace{1em} \alpha<0, \hspace{1em} c<a^{\alpha} \Gamma\left(-\alpha\right), \nonumber \\
&& b=0, \hspace{1em} 1>\alpha \geq 0, \hspace{1em} c>0,\nonumber \\
&& b=0, \hspace{1em} \alpha < 0, \hspace{1em} 0<c<a^{\alpha} \Gamma\left(-\alpha\right). \nonumber 
\end{eqnarray}
 The solution is real valued and positive, $z=x_7$, and 
$$\hspace{1em}\left(a^{\alpha} \Gamma\left(-\alpha\right)-c\right)/b>x_7>0,\hspace{1em} b>0,\hspace{1em}
\alpha< 0.$$
 No solution exists, otherwise. 
The nonexistence of any solution for $b=c=0$ suggests that the function
$\Gamma\left(\alpha, z\right)$ has no zero for $\alpha<1$, $\left|\arg z\right|<\pi$ and $z\neq 0$. As proof \cite{EMOT2}, apply Theorem \ref{Th1} to $\varphi\left(\zeta\right)=\zeta^{-\alpha} \exp\left\{-a \zeta\right\}$.
Notice that the critical value is infinite for $0\leq \alpha<1$.

The transcendental equation \begin{eqnarray}
\hspace{-3em}\Gamma\left(1-\alpha\right) z^{\alpha-1} e^{a/z } \Gamma\left(\alpha, a /z\right) -bz-c=0,
\hspace{1em}\alpha<1, \hspace{1em} \left|\arg z\right|<\pi,\label{gamma2}
\end{eqnarray}
in terms of the incomplete Gamma function \cite{EMOT2}, has %SFnist, }, has %in $\mathbb{C}/\left(-\infty,0\right.\left.\right]$ 
one solution
under each of the following conditions,
\begin{eqnarray}
&& b>0,\hspace{1em} c<a^{\alpha-1} \Gamma\left(1-\alpha\right), \nonumber \\
&& b=0, \hspace{1em}0<c<a^{\alpha-1} \Gamma\left(1-\alpha\right), \nonumber
\end{eqnarray}
 The solution is real valued and positive, $z=x_8$, and 
$$\left(a^{\alpha-1} \Gamma\left(1-\alpha\right)-c\right)/b>x_8>0, \hspace{1em}  b>0.$$
 No solution exists, otherwise. 
 As proof \cite{EMOT2}, apply Theorem \ref{Th1} to $\varphi\left(\zeta\right)=\zeta^{\alpha-1} \exp\left\{-a/ \zeta\right\}$.

The transcendental equation \begin{eqnarray}
&&\hspace{-6em}\Gamma\left(2\alpha +1\right) z^{\alpha}\Bigg(e^{\imath \left(\pi \alpha+a z^{1/2}\right)}
\Gamma\left(-2\alpha, \imath a z^{1/2} \right)+
e^{-\imath \left(\pi \alpha+a z^{1/2}\right)}\nonumber \\&&\hspace{-6em}\times\,
\Gamma\left(-2\alpha, -\imath a z^{1/2} \right)\Bigg) -b z -c =0,
\hspace{1em}\alpha>-1, \hspace{1em} \left|\arg z\right|<\pi,\label{gamma3}
\end{eqnarray}
in terms of the incomplete Gamma function \cite{EMOT2}, has %SFnist, }, has %in $\mathbb{C}/\left(-\infty,0\right.\left.\right]$ 
one solution
under each of the following conditions,
\begin{eqnarray}
&& b>0,\hspace{1em} \alpha>0, \hspace{1em} c<2 a^{-2\alpha} \Gamma\left(2\alpha\right), \nonumber \\
&& b>0, \hspace{1em} 0\geq \alpha>-1,  \nonumber \\
&& b=0, \hspace{1em}\alpha>0, \hspace{1em} 0<c<2 a^{-2\alpha} \Gamma\left(2\alpha\right), \nonumber \\
&& b=0, \hspace{1em}  0\geq \alpha>-1,\hspace{1em} c>0. \nonumber 
\end{eqnarray}
 The solution is real valued and positive, $z=x_9$, and 
$$\left(2 a^{-2\alpha} \Gamma\left(2\alpha\right)-c\right)/b>x_9>0, \hspace{1em}b>0, \hspace{1em}\alpha>0.$$
 No solution exists, otherwise. 
 As proof \cite{EMOT2}, apply Theorem \ref{Th1} to $\varphi\left(\zeta\right)=\zeta^{\alpha} \exp\left\{-a  \zeta^{1/2}\right\}$.
Notice that the critical value is infinite for $0\geq \alpha>-1$.

\subsubsection{Logaritmic derivative of the Gamma functions}

The transcendental equation 
\begin{eqnarray}
\hspace{-2em}\psi \left(a z^{1/2}\right)-\log\left(a z^{1/2}\right) + \left(2 a z^{1/2}\right)^{-1}+ b z +c=0, \hspace{1em}
\left|\arg z\right|<\pi, \label{psiG1}
\end{eqnarray}
in terms of the logarithmic derivative of the Gamma function \cite{EMOT2}, $\psi(z)=\left(\log\left(\Gamma (z)\right)\right)^{\prime}$, has %in $\mathbb{C}/\left(-\infty,0\right.\left.\right]$ 
one solution if $b>0$, or if $b=0$ and $c>0$.
 The solution is real valued and positive. No solution exists, otherwise.
As proof \cite{EMOT2}, apply Theorem \ref{Th1} to $\varphi\left(\zeta\right)=1\big/\left(\exp\left\{a \zeta^{1/2}\right\}-1\right)$.
Notice that the critical value is infinite.

The transcendental equation 
\begin{eqnarray}
\psi \left( z^{1/2}/2+1/2\right)-\psi \left( z^{1/2}/2\right)-z^{-1/2}-b z - c =0, \hspace{1em}
\left|\arg z\right|<\pi, \label{psiG2}
\end{eqnarray}
in terms of the logarithmic derivative of the Gamma function \cite{EMOT2}, has %in $\mathbb{C}/\left(-\infty,0\right.\left.\right]$ 
one solution if $b>0$, or if $b=0$ and $c>0$.
 The solution is real valued and positive. No solution exists, otherwise. 
As proof \cite{EMOT2}, apply Theorem \ref{Th1} to $\varphi\left(\zeta\right)=1/\sinh\left( \pi \zeta^{1/2}\right)$. Notice that the critical value is infinite.

The transcendental equation 
\begin{eqnarray}
\hspace{-1.1em}z^{-1/2}\Bigg(\psi \left( \frac{z^{1/2}}{2}+\frac{3}{4}\right)-\psi \left( \frac{z^{1/2}}{2}+\frac{1}{4}\right)\Bigg)-bz-c=0,\hspace{1em}
\left|\arg z\right|<\pi, \label{psiG3}
\end{eqnarray}
in terms of the logarithmic derivative of the Gamma function \cite{EMOT2}, has %in $\mathbb{C}/\left(-\infty,0\right.\left.\right]$ 
one solution if $b>0$, or if $b=0$ and $c>0$.
 The solution is real valued and positive. No solution exists, otherwise. 
As proof \cite{EMOT2}, apply Theorem \ref{Th1} to $\varphi\left(\zeta\right)=1\big/\left( \zeta^{1/2}\cosh\left( \pi \zeta^{1/2}\right)\right)$.
Notice that the critical value is infinite.

\subsubsection{Gauss hypergeometric series}

The transcendental equation 
\begin{eqnarray}
&&\hspace{-6em}\frac{\Gamma\left(\alpha\right) \Gamma\left(\beta-\alpha\right)}{a^{\beta-1}\Gamma\left(\beta\right)}z^{\alpha-1}\,
_2F_1\left(\beta-1,\alpha;\beta;1-z/a\right)-bz-c=0,\label{hyp1}\\&&
\hspace{-6em}\beta>\alpha>0, \hspace{1em}
\left|\arg z\right|<\pi, \nonumber
 \end{eqnarray} in terms of the Gauss hypergeometric series \cite{ EMOT2},  %SFnist, }, has %in $\mathbb{C}/\left(-\infty,0\right.\left.\right]$ 
has one solution
under each of the following conditions,
\begin{eqnarray}
&& b>0,\hspace{1em} \beta>\alpha>1, \hspace{1em} c<a^{\alpha-\beta}\Gamma\left(\alpha-1\right)
\Gamma\left(\beta-\alpha\right)/\Gamma\left(\beta-1\right),\nonumber \\
&&b>0, \hspace{1em}1\geq \alpha>0, \nonumber\\
&& b=0, \hspace{1em}\beta>\alpha>1, \hspace{1em}a^{\alpha-\beta}\Gamma\left(\alpha-1\right)
\Gamma\left(\beta-\alpha\right)/\Gamma\left(\beta-1\right)>c>0, \nonumber\\
&&b=0,\hspace{1em} 1\geq\alpha>0,\hspace{1em} c>0. \nonumber 
\end{eqnarray}
 The solution is real valued and positive, $z=x_{10}$, and $$\frac{a^{\alpha-\beta}\Gamma\left(\alpha-1\right)
\Gamma\left(\beta-\alpha\right)/\Gamma\left(\beta-1\right)-c}{b}>x_{10}>0, \hspace{1em}b>0,\hspace{1em}\beta>\alpha>1.$$ 
No solution exists, otherwise. 
The nonexistence of any solution for $b=c=0$ suggests that the function 
$_2F_1\left(\beta-1,\alpha;\beta;1-z/a\right)$ has no zero for
$\beta>\alpha>0$, $\left|\arg z\right|<\pi$ and $z\neq0$. As proof \cite{EMOT2}, apply Theorem \ref{Th1} to $\varphi\left(\zeta\right)=\zeta^{\alpha-1} /\left(\zeta +a\right)^{\beta-1}$.
Notice that the critical value is infinite for $1\geq \alpha>0$.

\subsubsection{Incomplete Beta function}

The function $I_{z}\left(\beta,\alpha\right)$ is defined in terms of the incomplete Beta function $B_{z}\left(\beta,\alpha\right)$ as follows \cite{EMOT2},\\
$I_{z}\left(\beta,\alpha\right)=B_{z}\left(\beta,\alpha\right)/B_{1}\left(\beta,\alpha\right)$.
The transcendental equation 
\begin{eqnarray}
&&\hspace{-6em}\pi \csc\left(\pi \alpha\right) z^{-\alpha} \left(a-z\right)^{-\beta} I_{1-z/a}\left(\beta,\alpha\right)
-bz-c=0,\label{IB1}\\
 &&\hspace{-6em} 1>\alpha>-\beta, \hspace{1em}\left|\arg z\right|<\pi,\nonumber
\end{eqnarray} has %SFnist, }, has %in $\mathbb{C}/\left(-\infty,0\right.\left.\right]$ 
one solution
under each of the following conditions,
\begin{eqnarray}
&& b>0,\hspace{1em} 0>\alpha>-\beta, \hspace{1em} c<a^{-\alpha-\beta}\Gamma\left(-\alpha\right)
\Gamma\left(\alpha+\beta\right)/\Gamma\left(\beta\right), \nonumber \\
&&b>0, \hspace{1em}1> \alpha\geq 0, \nonumber\\
&& b=0, \hspace{1em}0>\alpha>-\beta, \hspace{1em} a^{-\alpha-\beta}\Gamma\left(-\alpha\right)
\Gamma\left(\alpha+\beta\right)/\Gamma\left(\beta\right)>c>0, \nonumber\\
&&b=0,\hspace{1em} 1> \alpha\geq 0,\hspace{1em} c>0. \nonumber 
\end{eqnarray}
 The solution is real valued and positive, $z=x_{11}$, and $$\left(a^{-\alpha-\beta}\Gamma\left(-\alpha\right)
\Gamma\left(\alpha+\beta\right)/\Gamma\left(\beta\right)-c\right)\Big/b>x_{11}>0, \hspace{1em}b>0,  \hspace{1em} 0>\alpha>-\beta.
$$ No solution exists, otherwise. The nonexistence of any solution for $b=c=0$ suggests that the function $I_{1-z/a}\left(\beta,\alpha\right)$
has no zero for $1>\alpha>-\beta$, $\left|\arg z\right|<\pi$, $z\neq a,0$.
 As proof \cite{EMOT2}, apply Theorem \ref{Th1} to $\varphi\left(\zeta\right)=\zeta^{-\alpha} /\left(\zeta +a\right)^{\beta}$.
Notice that the critical value is infinite for $1>\alpha\leq 0$.

\subsubsection{Exponential integral}

The transcendental equation 
\begin{equation}
e^{\alpha z} {\rm Ei} \left(-\alpha z\right)+ b z +c=0, \hspace{1em} \alpha>0, \hspace{1em}
\left|\arg z\right|<\pi, \label{Ei1}
\end{equation}
in terms of the exponential integral function \cite{EMOT2}, has %in $\mathbb{C}/\left(-\infty,0\right.\left.\right]$ 
one solution if $b>0$, or if $b=0$ and $c>0$.
 The solution is real valued and positive. No solution exists, otherwise. 
The nonexistence of any solution for $b=c=0$ suggests that the exponential integral ${\rm Ei}(-z)$ has no zero for $\left|\arg z\right|<\pi$
and $z\neq 0$.
As proof \cite{EMOT2}, apply Theorem \ref{Th1} to $\varphi\left(\zeta\right)=\exp\left\{-\alpha \zeta\right\}$.
Notice that the critical value is infinite.

The transcendental equation 
\begin{eqnarray}
&&\hspace{-6em}e^{\alpha z} \left({\rm Ei} \left(-\alpha z-\alpha d\right)-{\rm Ei} \left(-\alpha z\right) \right)  - b z -c=0, \label{Ei2}\\
&&\hspace{-6em} \alpha>0,\hspace{1em} d>0,\hspace{1em}
\left|\arg z\right|<\pi, \nonumber
\end{eqnarray}
in terms of the exponential integral function \cite{EMOT2}, has %in $\mathbb{C}/\left(-\infty,0\right.\left.\right]$ 
one solution if $b>0$, or if $b=0$ and $c>0$.
 The solution is real valued and positive. No solution exists, otherwise. 
As proof \cite{EMOT2}, apply Theorem \ref{Th1} to $\varphi\left(\zeta\right)=\exp\left\{-\alpha \zeta\right\}$ for $0<\zeta<d$ and $\varphi\left(\zeta\right)=0$ for $\zeta>d$.
Notice that the critical value is infinite.

The transcendental equation \begin{eqnarray}
&&\hspace{-6em} e^{\alpha z} {\rm Ei} \left(-\alpha z-\alpha d\right)
 + b z +c=0, \label{Ei3}\\
&&\hspace{-6em} \alpha>0,\hspace{1em} d>0, \hspace{1em}
\left|\arg z\right|<\pi, \nonumber
\end{eqnarray}
in terms of the exponential integral function \cite{EMOT2}, has %SFnist, }, has %in $\mathbb{C}/\left(-\infty,0\right.\left.\right]$ 
one solution under each of the following conditions,
\begin{eqnarray}
&& b>0,\hspace{1em} c< - {\rm Ei}\left(-\alpha d\right), \nonumber \\
&& b=0, \hspace{1em} 0<c<-{\rm Ei}\left(-\alpha d\right), \nonumber
\end{eqnarray}
 The solution is real valued and positive, $z=x_{12}$, and 
$$\left(-{\rm Ei}\left(-\alpha d\right)-c\right)/b>x_{12}>0, \hspace{1em}b>0.$$ No solution exists, otherwise. 
 As proof \cite{EMOT2}, apply Theorem \ref{Th1} to $\varphi\left(\zeta\right)=0$ for $0<\zeta<d$ and $\varphi\left(\zeta\right)=\exp\left\{-\alpha \zeta\right\}$ for $\zeta>d$.

The transcendental equation \begin{eqnarray}
&&\hspace{-5em}(-1)^{n+1} z^n e^{\alpha z} {\rm Ei}\left(-\alpha z\right) 
+ \sum_{j=1}^n (-1)^{n-j} (j-1)!\, \alpha ^{-j} z^{n-j}-bz-c=0,\label{Ei4}\\
&&\hspace{-5em}\alpha>0, \hspace{1em}n \in \mathbb{N}_0, \hspace{1em}\left|\arg z\right|<\pi, \nonumber
\end{eqnarray}in terms of the exponential integral function \cite{EMOT2}, has %SFnist, }, has %in $\mathbb{C}/\left(-\infty,0\right.\left.\right]$ 
one solution
under each of the following conditions,
\begin{eqnarray}
&& b>0,\hspace{1em} c< (n-1)!\, \alpha^{-n}, \nonumber \\
&& b=0, \hspace{1em} 0<c<(n-1)!\, \alpha^{-n}. \nonumber 
\end{eqnarray}
 The solution is real valued and positive, $z=x_{13}$, and 
$$\left((n-1)! \,\alpha^{-n}-c\right)/b>x_{13}>0,  \hspace{1em}b>0.$$ No solution exists, otherwise. 
 As proof \cite{EMOT2}, apply Theorem \ref{Th1} to $\varphi\left(\zeta\right)=\zeta^n \exp \left\{-\alpha \zeta\right\}$.

The transcendental equation 
\begin{eqnarray}
\hspace{-2em}\left(\log\left(\alpha \gamma z\right)- e^{\alpha z}{\rm Ei} \left(-\alpha z\right)\right)/z-bz-c=0,
 \hspace{1em} \alpha>0,\hspace{1em}
\left|\arg z\right|<\pi, \label{Ei5}
\end{eqnarray}
in terms of the exponential integral function and the Euler-Mascheroni constant $\gamma=\exp\left\{C\right\}$ \cite{EMOT2}, has %in $\mathbb{C}/\left(-\infty,0\right.\left.\right]$ 
one solution if $b>0$, or if $b=0$ and $c>0$.
 The solution is real valued and positive. No solution exists, otherwise. 
As proof \cite{EMOT2}, apply Theorem \ref{Th1} to $\varphi\left(\zeta\right)=\left(1-\exp\left\{-\alpha \zeta\right\}\right)/
\zeta$.
Notice that the critical value is infinite.

\subsubsection{Error function}

The transcendental equation 
\begin{eqnarray}
\pi z^{-1/2}e^{a z} {\rm Erfc}\left(a^{1/2} z^{1/2}\right)-b z-c=0, \hspace{1em}
\left|\arg z\right|<\pi, \label{Erfc1}
\end{eqnarray}
in terms of the error function \cite{EMOT2}, has %in $\mathbb{C}/\left(-\infty,0\right.\left.\right]$ 
one solution if $b>0$, or if $b=0$ and $c>0$.
 The solution is real valued and positive. No solution exists, otherwise. 
The nonexistence of any solution for $b=c=0$ suggests that the error function ${\rm Erfc}(z)$ has no zero for $\left|\arg z\right|<\pi/2$ and $z\neq 0$.
As proof \cite{EMOT2}, apply Theorem \ref{Th1} to $\varphi\left(\zeta\right)=\zeta^{-1/2}
\exp\left\{- a \zeta\right\}$.
Notice that the critical value is infinite.

The transcendental equation 
\begin{eqnarray}
\pi z^{1/2} e^{a z}{\rm Erfc}\left(a^{1/2} z^{1/2}\right) + b z+c- \left(\pi /a\right)^{1/2}=0,\hspace{1em}
\left|\arg z\right|<\pi, \label{Erfc2}
\end{eqnarray}
in terms of the error function \cite{EMOT2}, has %in $\mathbb{C}/\left(-\infty,0\right.\left.\right]$ 
one solution 
under each of the following conditions,
\begin{eqnarray}
&& b>0,\hspace{1em} c< \left(\pi/a\right)^{1/2}, \nonumber \\
&& b=0, \hspace{1em} 0<c<\left(\pi/a\right)^{1/2}. \nonumber 
\end{eqnarray}
 The solution is real valued and positive, $z=x_{14}$, and $$\left(\left(\pi/a\right)^{1/2}-c\right)\Big/b
>x_{14}>0.$$ No solution exists, otherwise. 
As proof \cite{EMOT2}, apply Theorem \ref{Th1} to $\varphi\left(\zeta\right)=\zeta^{1/2}
\exp\left\{- a \zeta\right\}$.

\subsubsection{Cosine and sine integrals}

The transcendental equation 
\begin{eqnarray}
&&\hspace{-2em}2 \cos \left(a z^{1/2}\right) {\rm ci}\left(a z^{1/2}\right)- 2 \sin \left(a z^{1/2}\right) {\rm si}\left(a z^{1/2}\right)\nonumber \\ &&\hspace{-2em}
-b z-c =0,\hspace{1em}
\left|\arg z\right|<\pi, \label{ci1}
\end{eqnarray}
in terms of the cosine and sine integral functions \cite{EMOT2}, has %in $\mathbb{C}/\left(-\infty,0\right.\left.\right]$ 
one solution if $b>0$, or if $b=0$ and $c>0$.
 The solution is real valued and positive. No solution exists, otherwise. 
As proof \cite{EMOT2}, apply Theorem \ref{Th1} to $\varphi\left(\zeta\right)=\exp\left\{- a \zeta^{-1/2}\right\}$.
Notice that the critical value is infinite.

The transcendental equation 
\begin{eqnarray}
&&\hspace{-2em}2 z^{-1/2}\Big(\sin \left(a z^{1/2}\right) {\rm ci}\left(a z^{1/2}\right)+  \cos \left(a z^{1/2}\right) {\rm si}\left(a z^{1/2}\right)\Big) \nonumber \\
&&\hspace{-2em}+ b z+c =0,\hspace{1em}
\left|\arg z\right|<\pi, \label{ci2}
\end{eqnarray}
in terms of the cosine and sine integral functions \cite{EMOT2}, has %in $\mathbb{C}/\left(-\infty,0\right.\left.\right]$ 
one solution if $b>0$, or if $b=0$ and $c>0$.
 The solution is real valued and positive. No solution exists, otherwise. 
As proof \cite{EMOT2}, apply Theorem \ref{Th1} to $\varphi\left(\zeta\right)=\zeta^{-1/2}\exp\left\{- a \zeta^{-1/2}\right\}$.
Notice that the critical value is infinite.

\subsubsection{Whittaker function}

The transcendental equation \begin{eqnarray}
\hspace{-2em}z^{\left(\alpha+\beta\right)/2-1}e^{z/2} W_{-\left(\alpha+\beta\right)/2,
\left(\beta-\alpha\right)/2}(z)
-bz-c=0,
 \hspace{1em} \left|\arg z\right|<\pi,\label{EqWhittaker}
\end{eqnarray}
in terms of the Whittaker function \cite{SFnist}, where 
\begin{eqnarray}
&&\hspace{-5em} \alpha>-1/2, \hspace{1em} 1/2> \beta>-1/2, \hspace{1em}\text{or}\hspace{1em} \beta>-1/2, \hspace{1em} 1/2> \alpha>-1/2,\label{condWhit}
\end{eqnarray}
has %SFnist, }, has %in $\mathbb{C}/\left(-\infty,0\right.\left.\right]$ 
one solution if $b>0$, or if $b=0$ and $c>0$.
 The solution is real valued and positive. No solution exists, otherwise. 
 The nonexistence of any solution for $b=c=0$ suggests that under the condition (\ref{condWhit}) the Whittaker function $W_{-\left(\alpha+\beta\right)/2,
\left(\beta-\alpha\right)/2}(z)$ has no zero for $\left|\arg z\right|<\pi$ and $z\neq 0$.
As proof \cite{GHMC1967,HbkST}, apply Theorem \ref{Th1} to   
$$\varphi\left(\zeta\right)=
\frac{\zeta^{\left(\alpha+\beta\right)/2-1}e^{-\zeta/2}}{\Gamma\left(\alpha+1/2\right)\Gamma\left(\beta+1/2\right)}\,
W_{\left(\alpha+\beta\right)/2,\left(\alpha-\beta\right)/2}\left(\zeta\right).$$
The above function is positive on $\mathbb{R}_+$ under the condition (\ref{condWhit}).
 The asymptotic behaviours shown by the Whittaker function under the condition (\ref{condWhit}) induce the critical value to be infinite \cite{SFnist,PBMIS3},
\begin{eqnarray}\int_0^{\infty}\frac{\varphi\left(\zeta\right)}{\zeta}\, d\zeta=
\left.\frac{-\zeta^{\left(\alpha+\beta\right)/2-1}e^{-\zeta/2}\,
W_{\left(\alpha+\beta\right)/2-1,\left(\alpha-\beta\right)/2}\left(\zeta\right)}{\Gamma\left(\alpha+1/2\right)\Gamma\left(\beta+1/2\right)}
\right|^{\infty}_{0}=\infty. %\label{criticalWhithaker}
\end{eqnarray}

\subsubsection{Bessel functions}

The transcendental equation 
\begin{eqnarray}
&&\hspace{-2em}\frac{\pi}{2} J_{\alpha}\left(a z\right)Y_{\alpha}\left(a z\right)
+\frac{a z \Gamma\left(\alpha-1/2\right)}{\pi\Gamma\left(\alpha+3/2\right)} \,
_2F_3\left(1,1;\frac{3}{2},\alpha+\frac{3}{2},\frac{3}{2}-\alpha;-a^2z^2\right)\nonumber \\&&\hspace{-2em}
+\frac{\pi \left(a z\right)^{2\alpha} \tan\left(\pi \alpha\right)}{2^{2\alpha+1}\Gamma^2\left(\alpha+1\right)} \,_1F_2\left(\alpha+\frac{1}{2};
\alpha+1, 2\alpha+1;-a^2z^2\right)\nonumber \\&&\hspace{-2em} +b z +c=0,\hspace{1em}\alpha \in \mathbb{R},\hspace{1em}
\left|\arg z\right|<\pi, \label{EqmB1}
\end{eqnarray}
in terms of the Bessel functions and the generalized hypergeometric functions \cite{EMOT2,SFnist}, has one solution 
under each of the following conditions,
\begin{eqnarray}
&& b>0,\hspace{1em} \alpha>0,\hspace{1em} c< 1/\left(2\alpha\right), \nonumber \\
&& b>0,\hspace{1em} \alpha\leq 0,\nonumber \\
&& b=0,\hspace{1em} \alpha>0,\hspace{1em}1/\left(2\alpha\right)>c>0, \nonumber\\ 
&& b=0,\hspace{1em} \alpha\leq 0,\hspace{1em}c>0. \nonumber 
\end{eqnarray}
 The solution is real valued and positive, $z=x_{15}$, and 
$$\left(1/\left(2\alpha\right)-c\right)/b
>x_{15}>0,\hspace{1em}b>0,\hspace{1em}\alpha>0.$$ No solution exists, otherwise. 
As proof \cite{PBMIS2}, apply Theorem \ref{Th1} to $\varphi\left(\zeta\right)=J^2_{\alpha}\left(\zeta\right)$.

The transcendental equation 
\begin{eqnarray}
\left(2 \pi z\right)^{-1/2}e^{z} K_{\beta}(z)-bz-c=0,\hspace{1em} 1/2 >\beta>-1/2,\hspace{1em} \left|\arg z\right|<\pi,  \label{EqmB2}
\end{eqnarray}
in terms of the modified Bessel function \cite{EMOT2,SFnist}, has %in $\mathbb{C}/\left(-\infty,0\right.\left.\right]$ 
one solution if $b>0$, or if $b=0$ and $c>0$.
 The solution is real valued and positive. No solution exists, otherwise. 
The nonexistence of any solution for $b=c=0$ suggests that the function $K_{\alpha}\left(z\right)$, has no zero for $\left|\arg z\right|<\pi$, $z\neq 0$ and $1/2 >\beta>-1/2$.
As proof \cite{GHMC1967,HbkST}, relate the modified Bessel function to the Whittaker function, $W_{0, \beta}(z)= \left(z/\pi\right)^{1/2} K_{\beta}\left(z/2\right)$, and consider Eq. (\ref{EqWhittaker}).

\subsubsection{Lambert $W$ function}

The Lambert $W$ function is defined via the inverse of the map 
$W\to W\exp\left\{W\right\}$. The function is multivalued, has branches and several integral representations. We refer to \cite{W0,W1,W2,W3} for details. 
The transcendental equation 
\begin{eqnarray}
W\left(z\right)/z - b z-c=0,\hspace{1em}
\left|\arg z\right|<\pi, \label{W1}
\end{eqnarray}
has %in $\mathbb{C}/\left(-\infty,0\right.\left.\right]$ 
one solution 
under each of the following conditions,
\begin{eqnarray}
&& b>0,\hspace{1em} c<  \int_{1/e}^{\infty} \Im \left\{W(-t)\right\}/\left(\pi t^2\right) dt<e, \nonumber \\
&& b=0, \hspace{1em} 0<c<\int_{1/e}^{\infty} \Im \left\{W(-t)\right\}/\left(\pi t^2\right) dt <e. \nonumber 
\end{eqnarray}
 The solution is real valued and positive, $z=x_{16}$, and 
$$\left(e-c\right)/b>
\left(\int_{1/e}^{\infty} \Im \left\{W(-t)\right\}/\left(\pi t^2\right) dt-c\right)\Bigg/b
>x_{16}>0.$$
 No solution exists, otherwise. Since no solution exists for $b=c=0$, we recover \cite{W0} that the Lambert $W$ function has no zero for $\left|\arg z\right|<\pi$ and $z\neq 0$.
As proof \cite{W1}, consider the following relationship holding for $\left|\arg z\right|<\pi$, 
\begin{equation}\frac{W(z)}{z}= 
\int_{1/e}^{\infty}\frac{\Im \left\{W(-t)\right\}/\left(\pi t\right)}{t+z}\, dt, \label{intW}
\end{equation} 
the inequality $1>\Im \left\{W(-t)\right\}/\pi>0$, holding for $t>1/e$, and apply Theorem \ref{Th1}.

The transcendental equation 
\begin{eqnarray}
W^{\prime}\left(z\right) - b z-c=0,\hspace{1em}
\left|\arg z\right|<\pi, \label{W2}
\end{eqnarray}
has %in $\mathbb{C}/\left(-\infty,0\right.\left.\right]$ 
one solution 
under each of the following conditions,
\begin{eqnarray}
&& b>0,\hspace{1em} c<  \int_{1/e}^{\infty} \frac{1}{\pi t}\frac{d}{dt}\Im \left\{W(-t)\right\} dt, \nonumber \\
&& b=0, \hspace{1em} 0<c<\int_{1/e}^{\infty} \frac{1}{\pi t}\frac{d}{dt}\Im \left\{W(-t)\right\} dt. \nonumber 
\end{eqnarray}
 The solution is real valued and positive.
 No solution exists, otherwise.
The nonexistence of any solution for $b=c=0$ suggests that the derivative of the Lambert $W$ function, $W^{\prime}\left(z\right)$, has no zero for $\left|\arg z\right|<\pi$ and $z\neq 0$.
This property is expected since the following relationship \cite{W0} holds, $W^{\prime}\left(z\right)=W\left(z\right)/\left(z \left(W \left(z\right)+1\right)\right)$.
As proof \cite{W2,W3}, consider the following relationship holding for $\left|\arg z\right|<\pi$, \begin{equation}W^{\prime}(z)= 
\int_{1/e}^{\infty} \frac{1}{\pi(t+z)}\frac{d}{dt}\Im \left\{W(-t)\right\} dt, \label{intW'}
\end{equation} 
and apply Theorem \ref{Th1}.

\subsubsection{Binet function }

The Binet function $J(z)$ is defined as follows,
\begin{eqnarray}
J(z)=\log \left(\Gamma(z)\right)-\left(z-1/2\right)
\log \left(z\right)+z-\log \left(\left(2 \pi\right)^{1/2}\right).
\label{Jbinet}
\end{eqnarray}
The function is multivalued, has branches and is bounded in certain sectors of the complex plane. We refer to \cite{Hen2} for details.
The transcendental equation \begin{eqnarray}
J\left(z^{1/2}\right)\Big/ z^{1/2} -bz-c=0,
 \hspace{1em} \left|\arg z\right|<\pi,\label{EqJbinet}
\end{eqnarray}
in terms of the Binet function, has %in $\mathbb{C}/\left(-\infty,0\right.\left.\right]$ 
one solution if $b>0$, or if $b=0$ and $c>0$.
 The solution is real valued and positive. No solution exists, otherwise. 
The nonexistence of any solution for $b=c=0$ suggests that the Binet function $J\left(z\right)$ has no zero for $\left|\arg z\right|<\pi/2$ and $z\neq 0$.
  As proof \cite{Hen2,HbkST}, consider the following relationship,
	$$\frac{J\left(z^{1/2}\right)}{ z^{1/2}}=
	\int_0^{\infty} \frac{1}{2 \pi y^{1/2}\left(y+z\right)}\, \log\left(\frac{1}{1-e^{-2\pi y^{1/2}}}\right)\, dy,$$
	and apply Theorem \ref{Th1}. 
	%to the following function $$\varphi\left(\zeta\right)=\log\left(1 \Big/\left(1-e^{-2\pi \zeta^{1/2}}\right)\right) \Big/\left(2 \pi \zeta^{1/2}\right).$$ 
Notice that the critical value is infinite.

Starting from the above examples, the adoption of the operator $\mathcal{T}$, given by Eq. (\ref{T}), obviously provides infinitely many transcendental equations. These equations can be ascribed to the form (\ref{Eq}). Consequently, the conditions for existence, uniqueness, reality, positivity and bounds for the solution can be analyzed by evaluating the corresponding critical value. As the above examples show, for special functions that are the Stieltjes transforms of nonnegative functions, the present method provides a simple way to investigate the absence of zeros in certain sectors of the complex plane. %cut along the negative real axis.

\section*{Acknowledgements}
The author thankfully acknowledges Prof. F. Mainardi and Prof. S. Rogosin for the careful and critical reading of the manuscript and for the suggestion of additional references.

\end{document}